\documentclass[10pt,reqno]{amsart}
\usepackage{enumitem,amsmath, amsthm, amssymb, amsfonts, mathtools,fullpage} 
\usepackage{graphicx, tikz}
\usepackage{hyperref}
\usepackage{multirow}
\usepackage{epigraph}
\usepackage{mathtools}
\usepackage{fullpage}
\usepackage{subfig}
\usepackage{color}
\usepackage{tikz}
\usepackage{adjustbox}
\usepackage{enumitem}
\usepackage{float}
\usetikzlibrary{arrows}
\theoremstyle{definition}

\newtheorem{theorem}{Theorem}[section]
\newtheorem{lemma}[theorem]{Lemma}

\newtheorem{corollary}[theorem]{Corollary}

\newtheorem{problem}{Problem}

\newenvironment{mythm}[1]
  {\innercustomthm}
  {\endinnercustomthm}

\newcommand{\VPF}{\mathrm{VPF}}
\newcommand{\PF}{\mathrm{PF}}

\title{Vacillating parking functions}

\author{Bruce Fang}
\author{Pamela E. Harris}
\address[P.~E. Harris]{Department of Mathematical Sciences, University of Wisconsin-Milwaukee, Milwaukee, WI 53211}
\email{\textcolor{blue}{\href{mailto:peharris@uwm.edu}{peharris@uwm.edu}}}
\author{Brian M. Kamau}
\author{David Wang}
\address[B.~Fang, B.~M.~Kamau, D. Wang]{Department of Mathematics and Statistics, Williams College, Williamstown, MA 01267}
\email{\textcolor{blue}{\href{mailto:bf8@williams.edu}{bf8@williams.edu}}, 
\textcolor{blue}{\href{mailto:bmk5@williams.edu}{bmk5@williams.edu}}, 
\textcolor{blue}{\href{mailto:dw8@williams.edu}{dw8@williams.edu}}}

\begin{document}

\begin{abstract}
    For any integers $1\leq k\leq n$, we introduce a new family of parking functions called $k-$\textit{vacillating parking functions} of length $n$. 
    The parking rule for $k$-vacillating parking functions allows a car with preference $p$ to park in the first available spot in encounters among the parking spots numbered $p$, $p-k$, and $p+k$ (in that order and if those spots exists).
    In this way,  $k$-vacillating parking functions are a modification of Naples parking functions, which allow for backwards movement of a car, and of $\ell$-interval parking functions, which allow a car to park in its preference or up to $\ell$ spots in front of its preference.  
    Among our results, we establish a combinatorial interpretation for the numerator of the $n$th convergent of the continued fraction of $\sqrt{2}$, as the number of non-decreasing $1$-vacillating parking functions of length~$n$.
    Our main result gives a product formula for the enumeration of $k$-vacillating parking functions of length $n$ based on the number of $1$-vacillating parking functions of smaller length. 
    We conclude with some directions for further research.
\end{abstract}

\maketitle

\section{Introduction}

Fix $n\in\mathbb{N}\coloneqq\{1,2,3,\ldots\}$ and let $[n]\coloneqq\{1,2,3,\ldots,n\}$. 
Consider $n$ cars in order from 1 to $n$ entering a one-way street consisting of $n$ consecutively numbered parking spots. 
For each $i\in[n]$, car $i$ has a preferred parking spot $a_i\in[n]$
and we encode their preferences in a parking list $(a_1,a_2,\ldots,a_n)$. 
Given a preference list $\alpha=(a_1,a_2,\ldots,a_n)\in[n]^n$, for $i=1,2,\ldots,n$, car $i$ drives down the street and parks in the first available spot at or past its preference $a_i$, if such a parking spot exists. 
If all cars are able to park, then we say that $\alpha=(a_1,a_2,\ldots,a_n)$ is a \textit{parking function} (of length $n$).
Throughout we let $\PF_n$ denote the set of parking functions of length $n$.
For example, $(3,2,4,1,1)\in\PF_5$ is a parking function, in which car 1 parks in spot 3, car 2 parks in spot 2, car 3 parks in spot 4, car 4 parks in spot 1, and car 5 parks in spot 5. However, $(3,2,4,4,4)$ is not a parking function as car 5 is unable to park on the street.

Parking functions were introduced by Konheim and Weiss in their study of hashing functions~\cite{Konheim1966AnOD}. 
They established that $|\PF_n| = (n + 1)^{n-1}$. Since their foundational result, parking functions have been shown to have connections to numerous areas of mathematical interest. For example, parking functions arise in the study of volumes of flow polytopes, hyperplane arrangements, ideal states in the game of the Tower of Hanoi, and even in the classical divide-and-conquer sorting algorithm $\mathrm{Quicksort}$ \cite{Aguillon2022OnPF, Benedetti2018ACM, Harris2023LuckyCA, Stanley1996HyperplaneAI}.
There are also numerous generalizations of parking functions with scenarios in which
\begin{itemize}[leftmargin=.25in]
    \item cars have assorted lengths, which are called parking sequences and parking assortments \cite{Chen_2023,MR3593646,franks2023counting};
    \item cars arriving later in the queue can bump earlier cars from their parking spot, called MVP parking functions \cite{MVP};
    \item cars can back up when finding their preferred spot occupied, called Naples parking functions \cite{Christensen2019AGO,colmenarejo2021counting};
    \item cars are restricted to park only in a subset of the spots on the street, called subset parking functions, and another called $\ell$-interval parking functions, which allows cars to park at most $\ell$ spots ahead of their preference~\cite{aguilarfraga2023interval,Hadaway2021GeneralizedPF, Spiro2019SubsetPF}.
\end{itemize}  
We point the reader to Yan ~\cite{yan2015parking}  for a comprehensive survey of results related to parking functions and for those interested in open problems in related to parking functions we recommend \cite{Choose}.

In this work, we describe a new parking rule based on a parameter $k\in[n]$, which we call the $k$\textit{-vacillating  parking rule}. This parking rule is described as follows: 
Given the preference list $\alpha=(a_1,a_2,\ldots,a_n)\in[n]^n$, cars enter in order $i=1,2,\ldots,n$. 
Upon the arrival of car $i \in [n]$ it attempts to park in its preferred spot $a_i$. If spot $a_i$ is unoccupied, it parks there. 
Otherwise, car $i$ backs up to check spot $a_i-k$ and parks there if it is available. If spot $a_i-k$ does not exist or is occupied, then car $i$ drives forward to spot $a_i+k$ (if it exists) and parks there if it is available.
If the preference list $\alpha=(a_1, \ldots, a_n)\in [n]^n$ allows all of the cars to park under the $k$-vacillating parking rule, then we say $\alpha$ is a \textit{$k$-vacillating parking function} of length $n$. 
    We let $\VPF_{n}(k)$ denote the set of $k$-vacillating parking functions of length $n$. When $k=1$ we call the $1$-vacillating parking functions simply vacillating parking functions and we further 
    simplify notation     by letting $\VPF_n\coloneqq\VPF_{n}(1)$.

    As illustrated in Figure~\ref{fig:ps example}, under the $2$-vacillating parking rule: car 1 prefers spot $4$ and parks in spot $4$,  car 2 prefers spot $1$ and parks in spot $1$. Note car 3 also prefers spot $1$ and since spot $1$ is already occupied and spot $1-2=-1$ does not exist, then car 3 parks in spot $1+2=3$. Similarly, car 4 prefers spot $4$ and parks in spot $4-2=2$. Since all the car can park, then $(4, 1, 1, 4)\in \VPF_{4}(2)$. 
However, one can confirm that the preference list $(4, 1, 1, 1)\in [4]^4$, as illustrated in Figure~\ref{fig:ps example}, has the first three cars parking in spots $4$, $1$, and $3$, respectively. Car 4 prefers spot $1$, and since spot $1$ and spot $3$ are both occupied, then car 4 leaves without parking. Thus, $(4, 1, 1, 1)\notin \VPF_{4}( 2)$.

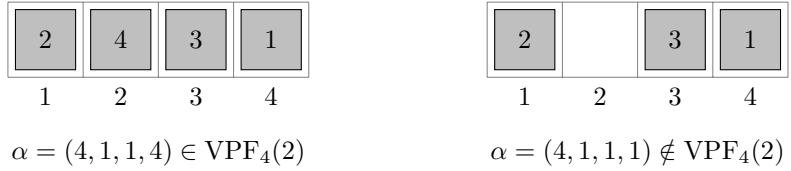
\begin{figure}[!h]
    \centering
    \begin{tikzpicture}
\node at (2,-1) {$\alpha=(4,1,1,4)\in\VPF_4 ( 2)$};
    \draw[step=1cm,gray,very thin] (0,0) grid (4,1);
    \draw[fill=gray!50] (.1,0.1) rectangle (.9,.9);
    \node at (.5,.5) {$2$};
    \draw[fill=gray!50] (1.1,0.1) rectangle (1.9,.9);
    \node at (1.5,.5) {$4$};
    \draw[fill=gray!50] (2.1,0.1) rectangle (2.9,.9);
    \node at (2.5,.5) {$3$};
    \draw[fill=gray!50] (3.1,0.1) rectangle (3.9,.9);
    \node at (3.5,.5) {$1$};
    \node at (.5,-.25) {$1$};
    \node at (1.5,-.25) {$2$};
    \node at (2.5,-.25) {$3$};
    \node at (3.5,-.25) {$4$};
    \end{tikzpicture}
    \qquad\qquad\qquad
    \begin{tikzpicture}
\node at (2,-1) {$\alpha=(4,1,1,1)\notin\VPF_4( 2)$};
    \draw[step=1cm,gray,very thin] (0,0) grid (4,1);
    \draw[fill=gray!50] (.1,0.1) rectangle (.9,.9);
    \node at (.5,.5) {$2$};
    \draw[fill=gray!50] (2.1,0.1) rectangle (2.9,.9);
    \node at (2.5,.5) {$3$};
    \draw[fill=gray!50] (3.1,0.1) rectangle (3.9,.9);
    \node at (3.5,.5) {$1$};
    \node at (.5,-.25) {$1$};
    \node at (1.5,-.25) {$2$};
    \node at (2.5,-.25) {$3$};
    \node at (3.5,-.25) {$4$};
    \end{tikzpicture}
    \caption{Parking position of cars under the $2$-vacillating parking rule.
}
    \label{fig:ps example}
\end{figure}

Our main result gives a product formula for the number of $k$-vacillating parking functions of length~$n$, which depends solely on the enumeration of the vacillating parking functions.

\begin{mythm}{4.1}
Fix integers $1\leq k\leq n$. If $n=ka+b$, with $0\leq a$ and $0\leq b\leq k-1$, then \[|\VPF_n(k)| \ = \left(\
\frac{n!}{\prod_{t=0}^{k-1}\lfloor\frac{n+t}{k}\rfloor!}\right)\cdot|\VPF_{a+1}|^b\cdot|\VPF_{a}|^{k-b},\]
where $|\VPF_{0}|\coloneqq1$.
\end{mythm}
We now state a recursive formula for the number of vacillating parking functions, which we prove in Section~\ref{mp 1}.

\begin{mythm}{2.1}
If $n\geq 1$, then the number of vacillating parking functions of length $n$, denoted by $|\VPF_n|$,  satisfies the recurrence relations:
\begin{align*}
|\VPF_n|&\ = \ \sum_{i=1}^n|\VPF_{n,i}|,
\\
|\VPF_{n,i}|&\ = \ |\VPF_{n,(i)}|+|\VPF_{n,[i]}|,
\\
    |\VPF_{n,(i)}|&\ = \ |\VPF_{n-1}|+(n-i)|\VPF_{n-2}|+\sum_{l=i}^{n-2}(l+1-i)|\VPF_{n-2, (l)}|,
\\
|\VPF_{n,[i]}|\ &= \ \frac{(i-1)(i-2)}{2}|\VPF_{n-3}|+\sum_{l=1}^{i-1}|\VPF_{n-1,[l]}|+(i-1)\sum_{l=1}^{i-2}|\VPF_{n-2,l}|\\
&\qquad\qquad+\sum_{l=1}^{i-3}\frac{l(1+l)}{2}|\VPF_{n-3, (l)}|+\frac{(i-2)(i-1)}{2}\sum_{l=i-2}^{n-3}|\VPF_{n-3, (l)}|
\end{align*}
where 
\begin{itemize}[leftmargin=.2in]
    \item $\VPF_{n,i}$ denotes the set of vacillating parking functions of length $n$ where spot $n$ is occupied by car~$i$;
    \item $\VPF_{n,(i)}$ denotes the set of vacillating parking functions of length $n$ where car $i$ prefers spot $n$ and parks in spot $n$;
    \item $\VPF_{n,[i]}$ denotes the set of vacillating parking functions of length $n$, where car $i$ prefers spot $n-1$ and parks in spot $n$.
\end{itemize}
The initial conditions are given by 
\begin{itemize}[leftmargin=.15in]
\item $|\VPF_{1, (1)}|=|\VPF_{2, (2)}|=1$,~$|\VPF_{2, (1)}|=2$,~$|\VPF_{3, (1)}|=7$,~$|\VPF_{3, (2)}|=5$,~$|\VPF_{3, (3)}|=4$;
\item $|\VPF_{1, [1]}|=|\VPF_{2, [1]}|=|\VPF_{3, [1]}|=|\VPF_{3, [2]}|=0$,~$|\VPF_{2, [2]}|=1$, $|\VPF_{3, [3]}|=4$.\end{itemize}
\end{mythm}
In Section~\ref{mmp1}, we consider the subsets of vacillating parking functions consisting of non-decreasing and non-increasing tuples, which we denote by 
$|\VPF_{n}^{\uparrow}|$
and $|\VPF_{n}^{\downarrow}|$, respectively. 
Our main results are: 
\begin{enumerate}
    \item Theorem~\ref{cardinality}: If $n\geq 1$, then
\[|\VPF_n^{\uparrow}| \ = \ 2\cdot|\VPF_{n-1}^{\uparrow}| + |\VPF_{n-2}^{\uparrow}|,\]
with $|\VPF_1^{\uparrow}|=1$ and $|\VPF_2^{\uparrow}|=3$.
\item Theorem~\ref{non-increasing}: If $n\geq 1$, then
\[|\VPF_n^{\downarrow}|\ = \ 2\cdot|\VPF_{n-1}^{\downarrow}| + |\VPF_{n-3}^{\downarrow}|,\]
with $|\VPF_1^{\downarrow}|=1$, $|\VPF_2^{\downarrow}|=3$, and $|\VPF_3^{\downarrow}|=6$.
\end{enumerate}
We remark that
the numerator of the $n$th convergent of the continued fraction of $\sqrt{2}$ satisfies the same recurrence as that presented in Theorem~\ref{cardinality}. Thus, our result 
provides a combinatorial interpretation for these numerators as the number of non-decreasing vacillating parking functions of length~$n$. We provide more details and some historical context on continued fractions in Section~\ref{continued fractions}.

In Section~\ref{Generalization} we prove our main result Theorem~\ref{genearlgeneral} and give the product formula for the number of $k$-vacillating parking functions of length $n$. We conclude the article in Section \ref{sec:future}, where we state some open problems for further study.

\section{Vacillating Parking Functions}\label{mp 1}
In this section, we prove the recursive formula for the number of vacillating parking functions by enumerating the following subsets:
\begin{itemize}[leftmargin=.2in]
    \item $\VPF_{n,i}$ is the set of vacillating parking functions of length $n$ where spot $n$ is occupied by car~$i$;
    \item $\VPF_{n,(i)}$ is the set of vacillating parking functions of length $n$ where car $i$ prefers spot $n$ and parks in spot $n$;
    \item $\VPF_{n,[i]}$ is the set of vacillating parking functions of length $n$, where car $i$ prefers spot $n-1$ and parks in spot $n$.
\end{itemize}

\begin{theorem}
\label{thm:multirecurrence}
Let $n\in\mathbb{N}$. Then the recursive formulas for $|\VPF_n|$ are given by
\begin{align}
|\VPF_n|&\ = \ \sum_{i=1}^n|\VPF_{n,i}|,\label{V1}
\\
|\VPF_{n,i}|&\ = \ |\VPF_{n,(i)}|+|\VPF_{n,[i]}|,\label{V2}
\\
    |\VPF_{n,(i)}|&\ = \ |\VPF_{n-1}|+(n-i)|\VPF_{n-2}|+\sum_{\ell=i}^{n-2}(\ell+1-i)|\VPF_{n-2, (\ell)}|,\label{V3}
\\
|\VPF_{n,[i]}|\ &= \ \frac{(i-1)(i-2)}{2}|\VPF_{n-3}|+\sum_{\ell=1}^{i-1}|\VPF_{n-1,[\ell]}|+(i-1)\sum_{\ell=1}^{i-2}|\VPF_{n-2,\ell}|\nonumber\\
&\qquad\qquad+\sum_{\ell=1}^{i-3}\frac{\ell(\ell+1)}{2}|\VPF_{n-3, (\ell)}|+\frac{(i-2)(i-1)}{2}\sum_{\ell=i-2}^{n-3}|\VPF_{n-3, (\ell)}|\label{V4},
\end{align}
with initial conditions 
\begin{itemize}[leftmargin=.2in]
\item $|\VPF_{1, (1)}|=|\VPF_{2, (2)}|=1$,$|\VPF_{2, (1)}|=2$,~$|\VPF_{3, (1)}|=7$,~$|\VPF_{3, (2)}|=5$,~$|\VPF_{3, (3)}|=4$;
\item $|\VPF_{1, [1]}|=|\VPF_{2, [1]}|=|\VPF_{3, [1]}|=|\VPF_{3, [2]}|=0$,~$|\VPF_{2, [2]}|=1$,~and~$|\VPF_{3, [3]}|=4$.
\end{itemize}
\end{theorem}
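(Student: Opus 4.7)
The plan is to prove the four recurrences in the order stated; the first two are essentially partitional, while the last two require detailed case analyses of parking behavior near the end of the street. Identity (V1) is immediate since any $\alpha \in \VPF_n$ fills all $n$ spots, so partitioning $\VPF_n$ by the index $i$ of the car occupying spot $n$ gives the stated decomposition. For (V2), I would observe that under the $1$-vacillating rule a car with preference $p$ inspects spots $p$, $p-1$, and $p+1$ in that order; hence a car can reach spot $n$ only from preference $p = n$ (direct) or $p = n-1$ (forward bump after finding $n-1$ and $n-2$ occupied), since preference $n+1$ does not exist. This partitions $\VPF_{n,i}$ as $\VPF_{n,(i)} \sqcup \VPF_{n,[i]}$.

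For (V3), I would fix $\alpha \in \VPF_{n,(i)}$ and classify the other $n-1$ cars according to whether any of them has preference $n$ and how cascading bumps near spot $n-1$ occur. The first term $|\VPF_{n-1}|$ arises when no other car has preference $n$, so deleting car $i$ yields a bijection with $\VPF_{n-1}$ (no bumps to spot $n$ can occur, since any such bump would obstruct $i$). The second term $(n-i)|\VPF_{n-2}|$ arises when exactly one later car $j > i$ has preference $n$ and is redirected to spot $n-1$; the factor $(n-i)$ counts the choices of $j$, and the remaining $n-2$ cars form a VPF on $[n-2]$. The final sum enumerates the more delicate scenarios where an additional car with preference $n-1$ appears after $j$ and triggers a further backward bump, indexed by $\ell$, with $|\VPF_{n-2,(\ell)}|$ counting the restricted subconfigurations.

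For (V4), given $\alpha \in \VPF_{n,[i]}$, car $i$ has preference $n-1$ and is bumped to $n$, so at time $i$ spots $n-1$ and $n-2$ are both occupied while spot $n$ is empty. Hence two cars from $\{1,\ldots,i-1\}$ must pre-occupy $n-1$ and $n-2$, which explains the factor $\binom{i-1}{2} = \tfrac{(i-1)(i-2)}{2}$ in the first and last terms. The five terms partition these configurations according to how the two earlier cars reach spots $n-1$ and $n-2$ (via direct preference, a backward bump, or a cascading interaction analogous to (V3)) and how the remaining cars before and after $i$ are distributed among the lower spots. The initial conditions are verified by direct enumeration of VPFs of length at most $3$.

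The main obstacle I anticipate is the case analysis in (V4): two distinct earlier cars must occupy spots $n-1$ and $n-2$, and each can arrive there via several mechanisms whose interactions with the parking of cars arriving after $i$ must be tracked carefully. Proving that the five terms are mutually exclusive and collectively exhaustive, and that each corresponds to a genuine bijection with one of the smaller $\VPF$-enumerations on the right-hand side, is the delicate part of the argument; the case analysis for (V3) follows the same pattern in simpler form.
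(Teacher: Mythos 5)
Your treatment of \eqref{V1} and \eqref{V2} matches the paper's and is fine: spot $n$ is reachable only from preference $n$ or from preference $n-1$ via a forward bump, so both partitions are immediate. For \eqref{V3} you have the right organizing idea (the paper cases on whether one or two cars prefer spot $n$, and, in the latter case, whether some car also prefers spot $n-1$), but you never derive the coefficient $(\ell+1-i)$ or the range $\ell=i,\dots,n-2$; in the paper these come from observing that the car preferring spot $n-1$ must arrive after the second car $j$ preferring spot $n$, writing the count as $\sum_{j=i+1}^{n}\sum_{\ell=j-1}^{n-2}|\VPF_{n-2,(\ell)}|$ after a reindexing that accounts for the two cars removed ahead of it, and regrouping. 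Describing that term as ``restricted subconfigurations'' leaves the actual count unestablished.

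The genuine gap is \eqref{V4}, which you explicitly defer. Moreover, the organizing principle you propose --- choose which two of the first $i-1$ cars occupy spots $n-1$ and $n-2$, giving a factor $\binom{i-1}{2}$ --- does not lead to the five terms. The paper instead cases on how many cars \emph{prefer} spot $n-1$ (three, two, or one). Only in the three-car case do both occupants of spots $n-1$ and $n-2$ prefer spot $n-1$, and there $\frac{(i-1)(i-2)}{2}$ arises as $\sum_{j=2}^{i-1}(j-1)$, counting positions $h<j<i$ of the first two such cars. In the two-car case the occupant of spot $n-2$ is a car preferring spot $n-2$ or $n-3$ (hence the term $(i-1)\sum_{\ell=1}^{i-2}|\VPF_{n-2,\ell}|$ with the unparenthesized subscript), and in the one-car case spot $n-1$ is filled by a car bumped forward from preference $n-2$ (hence $\sum_{\ell=1}^{i-1}|\VPF_{n-1,[\ell]}|$, with no binomial factor at all). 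Your plan as stated would miscount these mechanisms, and it offers no route to the coefficients $\frac{\ell(\ell+1)}{2}$ or to the split of the $|\VPF_{n-3,(\ell)}|$ sum at $\ell=i-2$, which in the paper records whether the car preferring spot $n-2$ precedes or follows car $i$ in the queue. Until this case analysis is actually carried out, the proof of \eqref{V4} is missing.
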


\begin{proof}
Enumerating the elements of $\VPF_{n}$ is equivalent to summing over the enumerations of vacillating parking functions of length $n$ where spot $n$ is occupied by car $i$, where $1\leq i\leq n$. Hence, we arrive at~\eqref{V1}:
\begin{equation*}
|\VPF_n| \ = \ \sum_{i=1}^n|\VPF_{n,i}|.
\end{equation*}
Moreover, since spot $n$ can only be occupied by a car that prefers spot $n$ or spot $n-1$, then we arrive at~\eqref{V2}:
\begin{equation*}
|\VPF_{n,i}| \ = \ |\VPF_{n,(i)}|+|\VPF_{n,[i]}|,
\end{equation*}
since  $\VPF_{n,(i)}$ is the set of vacillating parking functions of length $n$ where car $i$ prefers spot $n$ and parks in spot $n$; and $\VPF_{n,[i]}$ is the set of vacillating parking functions of length $n$, where car $i$ prefers spot $n-1$ and parks in spot $n$.

Next we show that $|\VPF_{n,(i)}|$ is enumerated by equation~\eqref{V3}. 
Since at most two cars can prefer spot $n$, we consider the following three cases:
\begin{enumerate}[leftmargin=.3in]
\item Two cars prefer spot $n$ and no cars prefer spot $n-1$.\\
Since  car $i$ prefers spot $n$ and parks in spot $n$, then it must be the first among the two cars that prefer spot $n$. Then, any single one of the $n-i$ cars after the  car $i$, could prefer spot $n$ but must park in spot $n-1$. Hence, there are $n-i$ possibilities for the second car that prefers spot $n$. Since no cars prefer spot $n-1$, 
and whichever second car has preference $n$, then the number of ways to park the rest of the $n-2$ cars is given by $|\VPF_{n-2}|$. Hence, in total there are $(n-i)|\VPF_{n-2}|$ such parking functions.
\item Two cars prefer spot $n$ and some other car prefers spot $n-1$.\\
By Case 1's argument, car $i$ is the first car that prefers spot $n$ and parks there. Let the second car that prefers spot $n$ be the  car $j$. Then $i+1\leq j\leq n$. Since some other car $\ell$ prefers spot $n-1$ and takes spot $n-2$, then the car $\ell$ has to be in queue after  car $j$, as otherwise  car $\ell$ would park in spot $n-1$ and car $j$ would have been unable to park. Hence, for each $1\leq i\leq n$ and  $i+1\leq j\leq n$, the number of ways to park the rest of the $n-2$ cars with car $\ell$ preferring spot $n-1$ and taking spot $n-2$ is exactly identical to the number of ways to park $n-2$ cars with  car $\ell$ preferring and taking spot $n-2$, which is equal to
\begin{equation*}
\sum_{\ell=j-1}^{n-2}|\VPF_{n-2, (\ell)}|.
\end{equation*}
Notice that the summation goes from $\ell=j-1$ to $n-2$ instead of from $\ell=j+1$ to $n$, as we must reindex to account for the fact that car $i$ and car $j$ precede  car $\ell$ in the queue. Now accounting for the fact that $i+1\leq j\leq n$, yields
\begin{equation*}
\sum_{j=i+1}^{n}\sum_{\ell=j-1}^{n-2}|\VPF_{n-2, (\ell)}|.   
\end{equation*}
Expanding the summations and grouping common terms, we arrive at the expression
\begin{equation*}
\sum_{\ell=i}^{n-2}(\ell+1-i)|\VPF_{n-2, (\ell)}|.
\end{equation*}
\item Only one car prefers spot $n$.\\
In this case there are $|\VPF_{n-1}|$ such parking functions.
\end{enumerate}
Adding the terms from each of the three cases above yields equation \eqref{V3}. 

Next, we show that $|\VPF_{n,[i]}|$ can be enumerated using equation~\eqref{V4}. Recall that $\VPF_{n,[i]}$ denotes the set of vacillating parking functions of length $n$, where car $i$ prefers spot $n-1$ and parks in spot $n$. We again consider three cases based on the number of cars that prefer spot $n-1$:
\begin{enumerate}[leftmargin=.3in]
\item Three cars prefer spot $n-1$.\\
Let the first car preferring spot $n-1$ be  car $h$, and the second such car be the car $j$. Since the  car $i$ takes spot $n$, then it must be the third car that prefers spot $n-1$. Hence,  car $h$ parks in spot $n-1$, car $j$ parks in spot $n-2$, and the car $i$ parks in spot $n$. 
So any car $\ell$ (if they exist) that prefers spot $n-2$ and parks in spot $n-3$ must satisfy $\ell>j$. Otherwise, the car $\ell$ would park in spot $n-2$ and car $j$ would park in spot $n$, which disallows the car $i$ from parking in spot $n$, as required.

When there exists such a car $\ell$, we can enumerate based on the relative positions of car $i$ and car $\ell$. Since the car $h$ precedes car $j$, which is succeeded by the car $\ell$, then if  car $\ell$ precedes car $i$, we have that $2\leq j\leq i-2$, $1\leq h\leq j-1$, and $j+1\leq \ell\leq i-1$. 
Since car $\ell$ prefers spot $n-2$ and parks in spot $n-3$, which is equivalent to  car $\ell$ preferring spot $n-3$ and taking spot $n-3$, then the number of parking functions in the case where car $k$ precedes car $i$  is given by
\begin{equation*}
\sum_{j=2}^{i-2}\sum_{h=1}^{j-1}\sum_{\ell=j-1}^{i-3}|\VPF_{n-3,(\ell)}|\ = \ \sum_{j=2}^{i-2}\sum_{\ell=j-1}^{i-3}(j-1)|\VPF_{n-3,(\ell)}|.
\end{equation*}
Notice that the summation goes from $\ell=j-1$ to $i-3$ instead of from $\ell=j+1$ to $i-1$ because  car $h$ and  car $j$ precede the  car $\ell$. Moreover, simplifying yields
\begin{equation*}
\sum_{j=2}^{i-2}\sum_{\ell=j-1}^{i-3}(j-1)|\VPF_{n-3,(\ell)}| \ = \ \sum_{j=2}^{i-1}\sum_{\ell=j-1}^{i-3}(j-1)|\VPF_{n-3,(\ell)}|
\end{equation*}
since if $j=i-1$, then $l=i-2>i-3$ and this summand does not exist. 

If the car $\ell$ succeeds car $i$, then $2\leq j\leq i-1$, $1\leq h \leq j-1$, and $i+1\leq \ell\leq n$. In this case, the number of parking functions when car $\ell$ succeeds car $i$ is given by
\begin{equation*}
\sum_{j=2}^{i-1}\sum_{h=1}^{j-1}\sum_{\ell=i-2}^{n-3}|\VPF_{n-3,(\ell)}|\ = \ \sum_{j=2}^{i-1}\sum_{\ell=i-2}^{n-3}(j-1)|\VPF_{n-3,(\ell)}|.
\end{equation*}
Thus,
when some car prefers spot $n-2$, the number of such parking functions is
\begin{equation*}
\sum_{j=2}^{i-1}\sum_{\ell=j-1}^{i-3}(j-1)|\VPF_{n-3,(\ell)}|+\sum_{j=2}^{i-1}\sum_{\ell=i-2}^{n-3}(j-1)|\VPF_{n-3,(\ell)}| \ = \ \sum_{j=2}^{i-1}\sum_{\ell=j-1}^{n-3}(j-1)|\VPF_{n-3,(\ell)}|.
\end{equation*}
Expanding the summations and grouping common terms yields
\begin{equation*}
\sum_{\ell=1}^{i-3}\frac{\ell(\ell+1)}{2}|\VPF_{n-3, (\ell)}|+\frac{(i-2)(i-1)}{2}\sum_{\ell=i-2}^{n-3}|\VPF_{n-3, (\ell)}|.
\end{equation*}
When there is no such car $\ell$, i.e.~no cars prefer spot $n-2$, then $2\leq j\leq i-1$, $1\leq h\leq j-1$, and the number of parking functions is
\begin{equation*}
\sum_{j=2}^{i-1}\sum_{h=1}^{j-1}|\VPF_{n-3}|\ = \ \sum_{j=2}^{i-1}(j-1)|\VPF_{n-3}| \ = \ \frac{(i-1)(i-2)}{2}|\VPF_{n-3}|.
\end{equation*}
Hence, when three cars prefer spot $n-1$, the total number of parking functions is
\begin{equation*}
\frac{(i-1)(i-2)}{2}|\VPF_{n-3}|+\sum_{\ell=1}^{i-3}\frac{\ell(\ell+1)}{2}|\VPF_{n-3, (\ell)}|+\frac{(i-2)(i-1)}{2}\sum_{\ell=i-2}^{n-3}|\VPF_{n-3, (\ell)}|.
\end{equation*}
\item Two cars prefer spot $n-1$.\\
Let the first car preferring spot $n-1$ be the  car $j$ and the car taking spot $n-2$ be car $\ell$. 
We know by assumption that the second car preferring spot $n-1$ is car $i$. 
Since only two cars prefer spot $n-1$, then car $j$ parks in spot $n-1$,  car $\ell$ parks in spot $n-2$, which forces car $i$ to park in spot $n$. Hence, car $\ell$ precedes car $i$, i.e.~$\ell<i$. When car $\ell$ precedes car $j$, then $1\leq j\leq i-1$ and $1\leq \ell\leq j-1$. Since it is unclear whether  car $\ell$ prefers spot $n-2$ and parks in spot $n-2$, or prefers spot $n-3$ and parks in spot $n-2$, then the number of such parking functions is
\begin{equation*}
\sum_{j=1}^{i-1}\sum_{\ell=1}^{j-1}|\VPF_{n-2,\ell}|.
\end{equation*}
When car $\ell$ succeeds car $j$, then $1\leq j\leq i-1$ and $j+1\leq \ell\leq i-1$. Hence, the number of such parking functions is
\begin{equation*}
\sum_{j=1}^{i-1}\sum_{\ell=j}^{i-2}|\VPF_{n-2,\ell}|.
\end{equation*}
Notice that the summation goes from $\ell=j$ to $i-2$ instead of from $k=j+1$ to $i-1$ because car $j$ precedes car $\ell$. Then, when two cars prefer spot $n-1$, the total number of parking functions is
\begin{equation*}
\sum_{j=1}^{i-1}\sum_{\ell=1}^{j-1}|\VPF_{n-2,\ell}|+\sum_{j=1}^{i-1}\sum_{\ell=j}^{i-2}|\VPF_{n-2,\ell}| \ = \ \sum_{j=1}^{i-1}\sum_{\ell=1}^{i-2}|\VPF_{n-2,\ell}|\ = \ (i-1)\sum_{\ell=1}^{i-2}|\VPF_{n-2,\ell}|.
\end{equation*}
\item One car prefers spot $n-1$.\\
Since only one car prefers spot $n-1$, then by assumption this is car $i$. Since car $i$ parks in spot $n$, and no other cars prefer spot $n-1$, then spot $n-1$ is taken by a car preferring spot $n-2$. Suppose that is car $\ell$. Note that car $\ell$ precedes car $i$, and $1\leq \ell\leq i-1$. Hence, the number of ways to park  the cars in the remaining $n-1$ spots with car $\ell$ preferring spot $n-2$ and parking in spot $n-1$ is exactly $|\VPF_{n-1,[\ell]}|$. Hence, summing over all possible $\ell$'s, the number of parking functions is
\begin{equation*}
\sum_{\ell=1}^{i-1}|\VPF_{n-1,[\ell]}|.
\end{equation*}
\end{enumerate}
Adding the terms from each of the three cases above yields equation \eqref{V4} and completes the proof.
\end{proof}

\section{Monotonic vacillating Parking Functions}\label{mmp1}
In this section, we enumerate non-decreasing or non-increasing vacillating parking functions. Recall that given a parking function $\alpha=(a_1,a_2,\ldots,a_n)$ we say $\alpha$ is non-decreasing if $a_i\leq a_{i+1}$ for all $1\leq i\leq n-1$, and we say $\alpha$ is non-increasing if $a_{i+1}\leq a_{i}$ for all $1\leq i\leq n-1$. We separate our results based on which of these subsets we focus on. 
 
 \subsection{Non-decreasing Vacillating Parking Functions}
 Throughout, we let $\VPF_{n}^{\uparrow}$  denote the set of non-decreasing vacillating parking functions of length $n$.

\begin{lemma}\label{structure1}
If $\alpha = (a_1,a_2,\ldots,a_n) \in \VPF_{n}^{\uparrow}$, then $i-1 \leq a_i \leq i+1$ for all $i\in [n]$.
\end{lemma}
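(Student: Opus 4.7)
The plan is to prove both inequalities by a pigeonhole argument exploiting the fact that under the $1$-vacillating rule any car with preference $p$ parks in one of the spots in $\{p-1,\,p,\,p+1\}\cap[n]$.

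For the lower bound $a_i\geq i-1$, I would focus on the first $i$ cars. Since $\alpha$ is non-decreasing, $a_j\leq a_i$ for every $j\leq i$, so car $j$ ends up in a spot of index at most $a_j+1\leq a_i+1$. These $i$ cars occupy $i$ distinct spots, all contained in $\{1,2,\ldots,a_i+1\}$, a set of size at most $a_i+1$, forcing $a_i+1\geq i$.

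For the upper bound $a_i\leq i+1$, I would apply the symmetric argument to cars $i,i+1,\ldots,n$. Non-decreasingness yields $a_j\geq a_i$ for every $j\geq i$, so each such car parks at a spot of index at least $a_j-1\geq a_i-1$. The $n-i+1$ cars in question therefore occupy distinct spots inside $\{a_i-1,a_i,\ldots,n\}\cap[n]$, a set of size at most $n-a_i+2$, so $n-a_i+2\geq n-i+1$, i.e.\ $a_i\leq i+1$.

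The only delicate point is the boundary behavior: when $a_i=n$ the spot $a_i+1$ does not exist, and when $a_i=1$ the spot $a_i-1$ does not exist. In both cases the relevant inequality holds trivially (respectively $a_i=n\geq i-1$ since $i\leq n$, and $a_i=1\leq i+1$ since $i\geq 1$), so these corners cause no real obstacle. The whole proof is essentially two applications of pigeonhole combined with the monotonicity of $\alpha$.
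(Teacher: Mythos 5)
Your proof is correct, and it takes a genuinely different route from the paper's. The paper proves the lemma by induction on $i$, with separate contradiction arguments for the two bounds in the inductive step (e.g.\ tracking two specific empty spots among the first $t+2$ and arguing they cannot both be filled later). You instead observe the single key fact that under the $1$-vacillating rule every car with preference $p$ that succeeds in parking occupies a spot in $\{p-1,p,p+1\}\cap[n]$, and then apply pigeonhole twice: cars $1,\ldots,i$ sit in $i$ distinct spots contained in $\{1,\ldots,a_i+1\}$, giving $a_i\geq i-1$, and cars $i,\ldots,n$ sit in $n-i+1$ distinct spots contained in $\{a_i-1,\ldots,n\}\cap[n]$, giving $a_i\leq i+1$. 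Both counts are valid as stated (the ``at most'' phrasing correctly absorbs the boundary cases $a_i=1$ and $a_i=n$, where the nominal interval spills outside $[n]$), and monotonicity is used only to uniformly bound the preferences of the relevant block of cars. Your argument is shorter, avoids induction entirely, and makes the structural reason for the bounds more transparent; the paper's inductive version is more in the spirit of its later case analyses but is correspondingly heavier.
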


\begin{proof}
We proceed by induction. 
Let $i=1$ and suppose that $a_1 \geq 3$. Since $\alpha$ is non-decreasing, then no car can park at spot 1 and consequently $\alpha$ is not a parking function, resulting in a contradiction.
Assume for induction that $i-1\leq a_i\leq {i+1}$ for all $i\leq t$. We want to show that $t \leq a_{t+1} \leq t+2$. 

For the sake of contradiction, assume that $a_{t+1} \leq t-1$. From the inductive hypothesis, we know that $t-1 \leq a_t \leq t+1$. Since the parking function is non-decreasing, then $t-1\leq a_t\leq a_{t+1}$, and so $t-1\leq a_{t+1}$. However, by our assumption we also know that $a_t\leq t-1$. 
This then implies that, $a_t = a_{t+1} = t-1$. Since the first $t-1$ cars park between spot $1$ and spot $t$ inclusively, and $a_t = a_{t+1} = t-1$, then the first $t+1$ cars also park in between spot $1$ and spot $t$ inclusively. Since the number of cars exceed the number of spots, we arrive at a contradiction. Hence $a_{t+1} \geq t$.

Next, assume, for the sake of contradiction, that $a_{t+1} \geq t+3$. By the induction hypothesis, we know $i-1 \leq a_i \leq i + 1$ for all $1 \leq i \leq t$. 
Hence then the first $t$ cars park between spot $1$ and $t+2$ inclusively. As there are only $t$ cars parked among $t+2$ parking spots, let us further suppose that the two empty spots among the first $t+2$ spots are numbered $j$ and $k$, where $1 \leq j, k \leq t+2$. 
Since the parking function is non-decreasing, then $a_{t+2},a_{t+3}, \ldots,a_{n} \geq t+3$. In this case, spot $j$ and spot $k$ will not both be occupied by the cars numbered $t+2$ through $n$ (inclusive).
Therefore $\alpha$ is not a parking function, resulting in a contradiction.

This establishes that 
$t \leq a_{t+1} \leq t+2$ and completes our induction argument.
\end{proof}

The inequality in Lemma \ref{structure1} allows us to find a recursive formula for $|\VPF_{n}^{\uparrow}|$. 

\begin{theorem}\label{cardinality}If $n\geq 1$, then
\[ |\VPF_n^{\uparrow}| \ = \ 2\cdot|\VPF_{n-1}^{\uparrow}| + |\VPF_{n-2}^{\uparrow}|,\]
with $|\VPF_1^{\uparrow}|=1$ and $|\VPF_2^{\uparrow}|=3$.
\end{theorem}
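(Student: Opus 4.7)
The plan is to partition $\VPF_n^\uparrow$ according to the value of $a_{n-1}$. By Lemma~\ref{structure1} together with $a_n\in[n]$, every $\alpha=(a_1,\ldots,a_n)\in\VPF_n^\uparrow$ satisfies $a_{n-1}\in\{n-2,n-1,n\}$ and $a_n\in\{n-1,n\}$. I would set
\[
\mathcal{A}=\{\alpha\in\VPF_n^\uparrow\colon a_{n-1}\le n-1\}\qquad\text{and}\qquad \mathcal{B}=\{\alpha\in\VPF_n^\uparrow\colon a_{n-1}=n\},
\]
and prove $|\mathcal{A}|=2\,|\VPF_{n-1}^\uparrow|$ and $|\mathcal{B}|=|\VPF_{n-2}^\uparrow|$, which together give the claimed recurrence. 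The base cases $|\VPF_1^\uparrow|=1$ and $|\VPF_2^\uparrow|=3$ follow by direct enumeration against the vacillating parking rule.

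For $\mathcal{A}$, the candidate bijection onto $\VPF_{n-1}^\uparrow\times\{n-1,n\}$ is the truncation $(a_1,\ldots,a_n)\mapsto((a_1,\ldots,a_{n-1}),a_n)$. The key observation is that under $a_{n-1}\le n-1$ we have $a_i\le n-1$ for every $i\le n-1$, and moreover no car $i\le n-1$ parks at spot $n$; hence the vacillating parking dynamics on streets of lengths $n-1$ and $n$ agree on the first $n-1$ cars, all of which land in spots $1,\ldots,n-1$. This both places $(a_1,\ldots,a_{n-1})$ in $\VPF_{n-1}^\uparrow$ and, conversely, makes both extensions $a_n\in\{n-1,n\}$ of any element of $\VPF_{n-1}^\uparrow$ valid on a length-$n$ street: car $n$ either parks at spot $n$ directly (if $a_n=n$), or tries $n-1$ and $n-2$, finds both occupied, and uses the forward move to the empty spot $n$ (if $a_n=n-1$).

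For $\mathcal{B}$, the non-decreasing condition combined with $a_n\le n$ forces $a_n=n$, so elements of $\mathcal{B}$ take the form $(a_1,\ldots,a_{n-2},n,n)$; the candidate bijection with $\VPF_{n-2}^\uparrow$ is the further truncation $\alpha\mapsto(a_1,\ldots,a_{n-2})$. In the forward direction, car $n-1$ (whose preference is $n$) must park, so spot $n$ must be empty when car $n-1$ arrives; then car $n$ must back into spot $n-1$, which forces spot $n-1$ to be empty after cars $1,\ldots,n-2$ have finished. Together these constraints place cars $1,\ldots,n-2$ precisely into spots $1,\ldots,n-2$, the same ``no car $i\le n-1$ reaches spot $n$'' argument applies, and the prefix lies in $\VPF_{n-2}^\uparrow$. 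The reverse direction is immediate: starting from $(a_1,\ldots,a_{n-2})\in\VPF_{n-2}^\uparrow$, simulation on a length-$n$ street fills spots $1,\ldots,n-2$, then car $n-1$ parks at $n$ and car $n$ backs into $n-1$.

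The hard part will be the structural claim used in both cases: no car $i\le n-1$ parks at spot $n$. Reaching spot $n$ requires the forward move, hence $a_i=n-1$, and Lemma~\ref{structure1} then restricts $i$ to $\{n-2,n-1\}$. In either subcase, a short accounting of which spots can possibly be occupied when car $i$ triggers its forward move shows that at least one of cars $n-1,n$ is subsequently unable to park, contradicting $\alpha\in\VPF_n^\uparrow$. Once this localized claim is established, the rest of the argument is bookkeeping on how the vacillating rule acts when all preferences stay within a bounded range.
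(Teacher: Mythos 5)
Your proposal is correct and follows essentially the same route as the paper: your split into $a_{n-1}\le n-1$ (paired with the two choices $a_n\in\{n-1,n\}$) versus $a_{n-1}=n$ is exactly the paper's case analysis on whether zero, one, or two entries equal $n$, with the same truncation bijections giving $2\,|\VPF_{n-1}^{\uparrow}|+|\VPF_{n-2}^{\uparrow}|$. Your explicit verification that no car $i\le n-1$ can reach spot $n$ (so the parking dynamics on streets of length $n-1$ and $n$ agree on the prefix) is a point the paper's proof passes over more quickly, but it is a refinement of the same argument rather than a different one.
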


\begin{proof}
Let $\alpha=(a_1,a_2,\ldots,a_n)\in\VPF_n^{\uparrow}$.
By Lemma \ref{structure1}, we know that there at most two preferences $a_i$ such that $a_i=n$. In fact $a_i=n$ only if $i=n-1$ or $i=n$. We consider the following three cases:
\begin{enumerate}[leftmargin=.3in]
    \item There is no $a_i$ such that $a_i = n$.
     Since $n-1 \leq a_n \leq n$ and $a_n \neq n$, then $a_n = n-1$. Since $a_n$ is predetermined and all other $a_i$'s are between $1$ and $n-1$ inclusively, then in this case the number of parking functions is equal to $|\VPF_{n-1}^{\uparrow}|$.
     \item There is one $a_i$ such that $a_i = n$.
     If there is only one $a_i = n$, then $i=n$ since $\alpha$ is non-decreasing. Since again $a_n$ is predetermined and all other $a_i$'s are between $1$ and $n-1$ inclusively, then in this case the number of parking functions is equal to $|\VPF_{n-1}^{\uparrow}|$.
     \item  There are two $a_i$ such that $a_i = n$.
     We know that the only two indices $i$ such that $a_i = n$ are $i=n-1$ and $i={n}$. In which case $a_{n-1}=a_{n}=n$. Then car $n-1$ and car $n$ will park in spot $n$ and spot $n-1$, respectively. 
     Hence, for $i\leq n-2$, we know $a_{i}\neq n-1$, as otherwise spot $n-1$ will be occupied before car $n$ arrives to park there. 
     Hence, $a_{i}\leq n-2$.
     Since $a_{n-1}$ and $a_n$ are predetermined and all other $a_i$'s are between $1$ and $n-2$ inclusively, then in this case the number of parking functions is equal to $|\VPF_{n-2}^{\uparrow}|$.
\end{enumerate}
The recursion follows from taking the sum over the count in the three above cases. For the initial conditions note that $|\VPF_{1}^\uparrow|=|\{(1)\}|=1$ and $|\VPF_2^\uparrow|=|\{(1,1),(1,2), (2,2)\}|=3$, as claimed.
\end{proof}

Before presenting a corollary to Theorem~\ref{cardinality}, we take a quick detour and provide some background on continued fractions.

\subsubsection{Background on continued fractions}\label{continued fractions}
A \textit{(simple) continued fraction} gives a way to express a real number $A$ as the sum of the integer part and the reciprocal of another real number $B$, from which the process iterates and $B$ is 
expressed as a sum of its integer part and another reciprocal. Iterating this process gives a possibly infinite  expression  of the form
\begin{align}\label{def:continued fraction}
A= a_1 +  \frac{1}{a_2 + \frac{1}{a_3 + \frac{1}{a_4 + \ldots}}},
\end{align}
where $a_1\in \mathbb{Z}$ and $a_i\in \mathbb{Z}_{\geq 0}$ for all $i\geq 2$.
It
is common to denote the continued fraction in \eqref{def:continued fraction} by $
    [a_1, a_2, a_3, a_4, 
\ldots]$.
Then $c_n=[a_1,\ldots,a_n]$ is called the 
\textit{$n$th convergent} of the continued fraction and $c_n=\frac{p_n}{q_n}$, where for $n\geq 3$, $p_n$ and $q_n$ satisfy
\begin{align}\label{recs:convergents}
    p_n&=a_np_{n-1}+p_{n-2}\qquad\mbox{ and }\qquad    q_n=a_nq_{n-1}+q_{n-2},
\end{align}
with initial values $p_1=a_1$, $q_1=1$ and $p_2=a_2a_1$, $q_2=a_2$.

It is now well-known that 
if $[a_1,a_2,\ldots]$ is finite, i.e., there exits $N\geq 1$ such that $a_i=0$ for all $i\geq N$, then the continued fraction represents a rational number.
On the other hand, if $[a_1,a_2,\ldots]$ is infinite, then it represents an irrational number \cite{rockett}.
Computing continued fractions of irrational numbers, such as $e$, $\pi$, and $\sqrt{2}$, has a long tradition in mathematics and mathematicians throughout history have contributed results to this study. 
For example, among Euler's numerous mathematical contributions, he established \cite{Euler} the continued fraction
$e=[2,1,2,1,1,4,1,1,6, 1,1,\ldots]$, which has a repeating pattern often denoted by $[2,1,\overline{2n,1,1}\,]_{n=1}^{\infty}$. A detailed proof of this result can be found in \cite{simplee}, with a shorter version provided in \cite{expandE}. In the case of~$\pi$,
Widž credits Huygens \cite{shortHistory} with establishing
$\pi=[3, 7, 15, 1, 292, 1, 1, 1, 2, 1,\ldots]$, for which no repeated pattern has been found. 
However, Lange gives an alternative continued fraction for $\pi$, whose  convergents do follow a nice pattern \cite[Theorem 1]{expandPi}. 

Using the recurrence relations in~\eqref{recs:convergents}, it is straightforward to show that the convergents $c_i=\frac{p_i}{q_i}$
of the continued fraction $\sqrt{2}=[1,2,2,2,\ldots]$ with sequnece given in \cite[\href{https://oeis.org/A040000}{A040000}]{OEIS},  satisfying $p_1=1$, $q_1=1$, $p_2=3$, $q_2=2$, and, for all $n\geq 3$,
 the recurrence relations 
\begin{align}
p_n&=2p_{n-1}+p_{n-2}\qquad\mbox{ and }\qquad
q_n=2q_{n-1}+q_{n-2}.\label{eq:convergents root 2}
\end{align}
The  sequence for $p_n$ in  \eqref{eq:convergents root 2} is given in \cite[\href{https://oeis.org/A001333}{A001333}]{OEIS} and next we show that these numerators have a combinatorial interpretation as the number of non-decreasing vacillating parking functions of length $n$.

\begin{corollary} The cardinality of
$|\VPF_{n}^{\uparrow}|$ is equal to the numerator of the $n$th convergent of the continued fraction of $\sqrt{2}$, which is also given by the recursive formula $p_n=2p_{n-1}+p_{n-2}$ with $p_1=1$ and $p_2=3$.
\end{corollary}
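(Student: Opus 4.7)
The plan is to derive this corollary as an immediate consequence of Theorem~\ref{cardinality}. Both sequences $\{|\VPF_n^{\uparrow}|\}_{n\geq 1}$ and $\{p_n\}_{n\geq 1}$ will be shown to satisfy the same second-order linear recurrence with identical initial data, so that a short induction on $n$ forces their equality term-by-term.

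Concretely, Theorem~\ref{cardinality} supplies the recurrence $|\VPF_n^{\uparrow}| = 2|\VPF_{n-1}^{\uparrow}| + |\VPF_{n-2}^{\uparrow}|$ valid for $n \geq 3$, together with the base values $|\VPF_1^{\uparrow}| = 1$ and $|\VPF_2^{\uparrow}| = 3$. On the other hand, since $\sqrt{2} = [1, 2, 2, 2, \ldots]$, so that $a_n = 2$ for every $n \geq 2$, the general convergent recurrences in~\eqref{recs:convergents} specialize to~\eqref{eq:convergents root 2}, namely $p_n = 2 p_{n-1} + p_{n-2}$ for $n \geq 3$ with $p_1 = 1$ and $p_2 = 3$, as recorded in the paragraph preceding the statement of the corollary.

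With the two recurrences and both pairs of initial conditions aligned, an induction on $n$ — the base cases $n = 1, 2$ handled by direct comparison and the inductive step following at once from the shared linear recurrence — yields $|\VPF_n^{\uparrow}| = p_n$ for every $n \geq 1$. There is no genuine obstacle in this argument: the substantive combinatorial content has already been absorbed by the proof of Theorem~\ref{cardinality}, and the corollary merely reinterprets $|\VPF_n^{\uparrow}|$ as the well-known integer sequence \cite[A001333]{OEIS}, thereby furnishing the promised combinatorial interpretation of the numerators of the convergents of $\sqrt{2}$.
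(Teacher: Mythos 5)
Your proposal is correct and matches the paper's (implicit) argument exactly: the paper derives the specialization $p_n=2p_{n-1}+p_{n-2}$ with $p_1=1$, $p_2=3$ in the paragraph preceding the corollary and treats the identification with Theorem~\ref{cardinality} as immediate. Your only addition is to make the routine induction explicit, which is fine but not a different route.
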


We conclude with a closed formula for the number of 
non-decreasing vacillating parking functions.
\begin{corollary}\label{sqrt2}
If $n\geq 1$, then
$|\VPF_{n}^{\uparrow}| \ = \ \frac{(1 + \sqrt{2})^n + (1 - \sqrt{2})^n }{2}$.
\end{corollary}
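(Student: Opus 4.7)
The plan is to apply the standard technique for linear recurrences with constant coefficients to the recursion
\[|\VPF_n^{\uparrow}| \ = \ 2\cdot|\VPF_{n-1}^{\uparrow}| + |\VPF_{n-2}^{\uparrow}|\]
established in Theorem~\ref{cardinality}. The associated characteristic polynomial is $x^2 - 2x - 1$, whose roots are $\lambda_{\pm} = 1\pm\sqrt{2}$. The general closed form is therefore $A\lambda_{+}^n + B\lambda_{-}^n$ for constants $A,B$ to be determined by the initial conditions $|\VPF_1^{\uparrow}|=1$ and $|\VPF_2^{\uparrow}|=3$.

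Rather than solve the linear system for $A$ and $B$ from scratch, I would simply \emph{guess} $A=B=\tfrac{1}{2}$ motivated by the desired symmetric formula, and then verify by induction on $n$. The base cases are immediate: for $n=1$, $\tfrac{1}{2}\bigl((1+\sqrt{2})+(1-\sqrt{2})\bigr)=1$, and for $n=2$, $\tfrac{1}{2}\bigl((1+\sqrt{2})^2+(1-\sqrt{2})^2\bigr)=\tfrac{1}{2}(6)=3$, matching the known values.

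For the inductive step, assume the closed formula holds for indices $n-1$ and $n-2$. Substituting into the recurrence gives
\[|\VPF_n^{\uparrow}| \ = \ \frac{1}{2}\Bigl(2\lambda_{+}^{n-1}+\lambda_{+}^{n-2}\Bigr) + \frac{1}{2}\Bigl(2\lambda_{-}^{n-1}+\lambda_{-}^{n-2}\Bigr).\]
Since each $\lambda_{\pm}$ satisfies $\lambda^2 = 2\lambda + 1$, multiplying by $\lambda^{n-2}$ yields $\lambda^n = 2\lambda^{n-1}+\lambda^{n-2}$, and the right-hand side collapses to $\tfrac{1}{2}(\lambda_{+}^n + \lambda_{-}^n)$, as desired.

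There is essentially no obstacle here; the whole argument is a textbook application of characteristic roots, and the main thing to be careful about is just recording the base cases $n=1,2$ correctly and invoking the identity $\lambda_{\pm}^2 = 2\lambda_{\pm} + 1$ cleanly in the inductive step.
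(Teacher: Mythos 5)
Your proof is correct and is essentially the same as the paper's: both verify the base cases $n=1,2$ and then run an induction whose key step is the identity $(1\pm\sqrt{2})^2 = 2(1\pm\sqrt{2})+1$, which is exactly how the paper collapses $\bigl(2(1\pm\sqrt{2})+1\bigr)(1\pm\sqrt{2})^{n-2}$ to $(1\pm\sqrt{2})^n$. The framing via the characteristic polynomial is a nice motivation but the substance of the argument is identical.
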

\begin{proof}
We prove this by induction. Note 
\begin{align*}
|\VPF_{1}^{\uparrow}|&=1=  \frac{(1 + \sqrt{2})^1 + (1 - \sqrt{2})^1 }{2},\\
\intertext{and}
|\VPF_{2}^{\uparrow}|&=3=\frac{(1 + \sqrt{2})^2 + (1 - \sqrt{2})^2 }{2}.
\end{align*}

Assume for induction that for all $3\leq i\leq n$, we have
\[|\VPF_{i}^{\uparrow}| \ = \ \frac{(1 + \sqrt{2})^i + (1 - \sqrt{2})^i }{2}.\]
Using Theorem~\ref{cardinality}, applying the induction to each term, and simplifying yields
\begin{align*}
|\VPF_{n}^{\uparrow}|&\ = \ 2\cdot|\VPF_{n-1}^{\uparrow}|+|\VPF_{n-2}^{\uparrow}|\\
&\ = \ 2\left(\frac{(1 + \sqrt{2})^{n-1} + (1 - \sqrt{2})^{n-1} }{2}\right)+\frac{(1 + \sqrt{2})^{n-2} + (1 - \sqrt{2})^{n-2} }{2}\\
&\ = \ \frac{(2(1+\sqrt{2})+1)(1+\sqrt{2})^{n-2}+(2(1-\sqrt{2})+1)(1-\sqrt{2})^{n-2}}{2}\\
&\ = \ \frac{(1+\sqrt{2})^2(1+\sqrt{2})^{n-2}+(1-\sqrt{2})^2(1-\sqrt{2})^{n-2}}{2}\\
&\ = \ \frac{(1 + \sqrt{2})^n + (1 - \sqrt{2})^n }{2},
\end{align*}
as claimed.
\end{proof}

\subsection{Non-increasing Vacillating Parking Functions}
 Throughout we let $\VPF_{n}^{\downarrow}$ to denote the set of non-increasing vacillating parking functions of length $n$.

We now present a result analogous to Theorem \ref{cardinality} consisting of a recursive formula for the number of non-increasing vacillating parking functions.
\begin{lemma}\label{structure2} If $\beta=({b_{1},b_{2},...,b_{n}})\in \VPF_{n}^{\downarrow}$, then $n-i \leq b_{i} \leq n+2-i$ for all $i\in [n]$.
\end{lemma}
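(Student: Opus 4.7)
The plan is to establish the two inequalities separately by direct pigeonhole arguments on suitable blocks of cars, rather than the induction used in Lemma \ref{structure1}. The key observation that drives both bounds is that, under the vacillating parking rule, a car $j$ that successfully parks must do so in one of the three spots $b_j - 1$, $b_j$, or $b_j + 1$; this is immediate from the definition since a car failing all three probes simply leaves, and by hypothesis $\beta \in \VPF_n^{\downarrow}$ so every car parks.

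For the upper bound $b_i \leq n + 2 - i$: every parked car $j$ occupies a spot at least $b_j - 1$. Since $\beta$ is non-increasing, $b_j \geq b_i$ for all $j \leq i$, and so the first $i$ cars each park at some spot $\geq b_i - 1$. Suppose for contradiction that $b_i \geq n + 3 - i$. Then the $i$ distinct spots occupied by cars $1, 2, \ldots, i$ must all lie in $\{b_i - 1, b_i, \ldots, n\}$, a set of size $n - b_i + 2 \leq i - 1$, which is impossible.

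For the lower bound $n - i \leq b_i$: symmetrically, every parked car $j$ occupies a spot at most $b_j + 1$, and by monotonicity $b_j \leq b_i$ for all $j \geq i$. Hence each of the $n - i + 1$ cars $i, i+1, \ldots, n$ parks at a spot $\leq b_i + 1$. If $b_i \leq n - i - 1$, then $b_i + 1 \leq n - i$, so these $n - i + 1$ cars would have to fit into the $n - i$ spots $\{1, 2, \ldots, n - i\}$, again a contradiction.

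I do not anticipate a substantive obstacle. The only subtlety worth checking is the boundary behavior: when $b_j - 1 = 0$ or $b_j + 1 = n + 1$, one of the probed spots lies outside $[n]$, but this only further restricts where car $j$ can park, so the pigeonhole counts are unaffected. Likewise, the upper bound is vacuous when $b_i \leq n - i - 1$ forces a value outside $[n]$ (for $i \in \{n-1, n\}$), and the lower bound is vacuous when $n + 3 - i > n$ (for $i \in \{1, 2\}$), so no separate base cases are needed.
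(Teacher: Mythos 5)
Your proof is correct, but it takes a genuinely different route from the paper. The paper proves Lemma \ref{structure2} by induction on $i$, mirroring the argument for Lemma \ref{structure1}: it assumes the bounds for all indices up to $t$ and derives the bounds for $b_{t+1}$ by a case analysis on which spots the first $t$ cars could have filled. You instead isolate the single structural fact that under the vacillating rule every car that parks does so in one of the spots $b_j-1$, $b_j$, $b_j+1$, and then get each inequality from a one-shot pigeonhole count: monotonicity confines cars $1,\dots,i$ to spots $\geq b_i-1$ (giving the upper bound) and cars $i,\dots,n$ to spots $\leq b_i+1$ (giving the lower bound). Your argument is shorter, needs no induction or base cases, and the two bounds are proved by visibly symmetric counts; it also generalizes immediately to the $k$-vacillating rule (replace $\pm 1$ by $\pm k$), which the paper's inductive bookkeeping would not. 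One cosmetic slip: in your closing sentence the words ``upper bound'' and ``lower bound'' are swapped relative to the conditions you attach to them (the condition $b_i \leq n-i-1$ being unsatisfiable for $i\in\{n-1,n\}$ makes the \emph{lower} bound automatic there, and $b_i \geq n+3-i$ being unsatisfiable for $i\in\{1,2\}$ makes the \emph{upper} bound automatic there); this does not affect the validity of the argument.
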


\begin{proof}
We proceed by induction. Let $i=1$. Suppose that $b_{1}\leq n-2$. 
Since $\beta$ is non-increasing, then no car can park at spot $n$, contradicting that $\beta$ is a parking function. 
Hence, $b_{1}\geq n-1$. Moreover, $b_{1}\leq n+1$. Thus $n-1 \leq b_{1}\leq n+1$.
Assume for induction that $n-i\leq b_i\leq n+2-{i}$ for all $i\leq t$. 
We want to show that  $n-(t+1)\leq b_{t+1}\leq n+2-(t+1)$. 

For the sake of contradiction, assume that $b_{t+1}\geq n+2-t$. From the inductive hypothesis, we know that $n-t\leq b_{t}\leq n+2-t$. 
Since the parking function is non-increasing, then $b_{t+2}\leq b_{t+3}\leq \ldots \leq b_n \leq n-(t+2)$. This implies that the first $t+1$ cars park in between spot $n$ and spot $n+1-t$, inclusively. 
Since the number of cars exceed the number of spots, we arrive at a contradiction. 
Hence, $b_{t+1}\leq n+2-(t+1)$.

On the other hand, suppose, for the sake of contradiction, that $b_{t+1}\leq n-(t+2)$. Since $n-i \leq b_{i} \leq n+2-i$, for all $1\leq i\leq t$, then the first $t$ cars park between spot $n-(t+1)$ and $n$, inclusively. 
Let the two unoccupied spots between spots $n$ and spot $n-(t+1)$ be $l$ and $m$, where $n-1-t\leq l<m\leq n$. 
Since the parking function is non-increasing, then $b_{t+2}, b_{t+3}, \ldots,b_{n}\leq n-(t+2)$. 
In this case, spot $l$ and spot $m$ cannot be simultaneously occupied, therefore $\beta$ will not be a parking function, creating a contradiction.

Therefore, $n-i \leq b_{i} \leq n+2-i$, for all $i\in[n]$, as desired.
\end{proof}

\begin{theorem}\label{non-increasing}
If $n\geq 1$, then \[|\VPF_n^{\downarrow}| \ = \ 2\cdot|\VPF_{n-1}^{\downarrow}| + |\VPF_{n-3}^{\downarrow}|,\]
with $|\VPF_1^{\downarrow}|=1$, $|\VPF_2^{\downarrow}|=3$, and $|\VPF_3^{\downarrow}|=6$.
\end{theorem}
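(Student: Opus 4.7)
The plan is to mirror the proof of Theorem~\ref{cardinality}, but with an additional layer of sub-casing to accommodate the rigid structure imposed on $\beta = (b_1, \ldots, b_n) \in \VPF_n^{\downarrow}$ at its head by Lemma~\ref{structure2}. Since $b_1 \in \{n-1, n\}$, I will partition $\VPF_n^{\downarrow}$ into three types based on $(b_1, b_2)$: type $\mathcal{X}$ with $b_1 = n$ and $b_2 \leq n-1$ (only car $1$ prefers spot $n$), type $\mathcal{Y}$ with $b_1 = b_2 = n$ (two cars prefer spot $n$), and type $\mathcal{Z}$ with $b_1 = n-1$ (no car prefers spot $n$).

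For type $\mathcal{X}$ the map $\beta \mapsto (b_2, \ldots, b_n)$ is a bijection with $\VPF_{n-1}^{\downarrow}$, since car $1$ parks at spot $n$ and the remaining cars, all with preferences in $[1, n-1]$, are unaffected by the deletion of spot $n$. For type $\mathcal{Z}$ I first argue that $b_1 = n-1$ forces $b_1 = b_2 = b_3 = n - 1$: because $k = 1$ and all preferences are at most $n-1$, the only way for spot $n$ to be occupied is for a car to prefer $n-1$ and find both $n-1$ and $n-2$ already occupied, and this necessitates exactly three initial cars with preference $n-1$. Cars $1, 2, 3$ then park at spots $n-1, n-2, n$ respectively. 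I further split type $\mathcal{Z}$ by $b_4$: when $b_4 \leq n - 3$ the suffix $(b_4, \ldots, b_n)$ directly provides a bijection with $\VPF_{n-3}^{\downarrow}$; when $b_4 = n-2$ car $4$ backs into spot $n-3$ and the shift map $(n-1, n-1, n-1, n-2, b_5, \ldots, b_n) \mapsto (n-2, n-2, n-2, b_5, \ldots, b_n)$ gives a bijection with type $\mathcal{Z}$ of length $n-1$. Writing $Z_n := |\mathcal{Z}|$, this yields the recursion $Z_n = |\VPF_{n-3}^{\downarrow}| + Z_{n-1}$.

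An entirely analogous sub-casing on $b_3$ handles type $\mathcal{Y}$: cars $1$ and $2$ park at spots $n$ and $n-1$, and $b_3 \in \{n-2, n-1\}$. If $b_3 \leq n - 2$ then $(b_3, \ldots, b_n) \in \VPF_{n-2}^{\downarrow}$, and if $b_3 = n - 1$ the shift map $(n, n, n-1, b_4, \ldots, b_n) \mapsto (n-1, n-1, b_4, \ldots, b_n)$ gives a bijection with type $\mathcal{Y}$ of length $n - 1$. Writing $Y_n := |\mathcal{Y}|$, this yields $Y_n = |\VPF_{n-2}^{\downarrow}| + Y_{n-1}$. Combining $|\VPF_n^{\downarrow}| = |\VPF_{n-1}^{\downarrow}| + Y_n + Z_n$ with the two recursions and eliminating $Y_{n-1} + Z_{n-1}$ via the same decomposition applied one level down, namely $|\VPF_{n-1}^{\downarrow}| = |\VPF_{n-2}^{\downarrow}| + Y_{n-1} + Z_{n-1}$, produces the desired identity $|\VPF_n^{\downarrow}| = 2|\VPF_{n-1}^{\downarrow}| + |\VPF_{n-3}^{\downarrow}|$. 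The base cases $n = 1, 2, 3$ follow by direct enumeration.

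The main obstacle will be verifying that each of the four bijections truly preserves the vacillating parking property. The delicate point is that when one peels off an initial entry and reindexes, the presence or absence of spot $n$ at the top could in principle affect cars whose preferences lie close to the top, since backing and forwarding are permitted. The key observation is that Lemma~\ref{structure2} together with non-increasing monotonicity forces the topmost occupied spots to be filled by a predictable prefix of cars in both the original and the reduced problem, so the cars whose positions I must track always encounter the same local configuration of empty spots upon arrival. Writing this out carefully for each bijection will constitute the bulk of the proof.
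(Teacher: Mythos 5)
Your proof is correct, but it takes a genuinely different route from the paper's. The paper conditions on the tail of the non-increasing tuple: it counts according to how many entries equal $1$ (zero, one, or two, the candidates necessarily being $b_{n-1}$ and $b_n$ by Lemma~\ref{structure2}), and the one-entry and two-entry cases each produce a correction term $|\VPF_{n-1}^{\downarrow}(1,\{1,1\})|$ --- the non-increasing vacillating parking functions of length $n-1$ whose last two entries are $1$ --- which enters with opposite signs and cancels in the final sum. You instead condition on the head: $b_1\in\{n-1,n\}$ yields your three types $\mathcal{X}$, $\mathcal{Y}$, $\mathcal{Z}$, and the auxiliary quantities $Y_n$ and $Z_n$ satisfy first-order recursions that you eliminate by applying the same decomposition at length $n-1$; the algebra $|\VPF_n^{\downarrow}|=|\VPF_{n-1}^{\downarrow}|+(|\VPF_{n-2}^{\downarrow}|+Y_{n-1})+(|\VPF_{n-3}^{\downarrow}|+Z_{n-1})=2|\VPF_{n-1}^{\downarrow}|+|\VPF_{n-3}^{\downarrow}|$ checks out. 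Your structural claim that type $\mathcal{Z}$ forces $b_1=b_2=b_3=n-1$ is sound and does not appear in the paper: the cars preferring $n-1$ form a prefix, the first parks at $n-1$, the second at $n-2$, the third overflows to $n$, a fourth could not park, and with fewer than three spot $n$ is unreachable since every other preference is at most $n-2$. Each of your four reductions rests on the same observation, which you correctly identify as the delicate point: a car preferring the largest surviving spot fails identically whether the next spot up is occupied or nonexistent, so deleting occupied top spots (and the cars parked there) does not change the fate of any later car. The paper's proof is shorter; yours trades the somewhat ad hoc cancellation of $|\VPF_{n-1}^{\downarrow}(1,\{1,1\})|$ for two transparent sub-recursions and fully explicit bijections, at the cost of the extra verification you acknowledge at the end.
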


\begin{proof}
From Lemma \ref{structure2}, we know that there at most two $b_i$ such that $b_i=1$. 
In fact, $i=n-1$ or $i=n$. 
We consider the following three cases:
\begin{enumerate}[leftmargin=.3in]
    \item \label{case1}There is no $b_i$ such that $b_i = 1$.
     Since $1\leq b_{n} \leq 2$, then $b_{n}=2$. 
     Hence, $b_{n}$ is predetermined and all other $b_{i}$'s are between $2$ and $n$ inclusively, which is equivalent to parking $n-1$ cars between spot $1$ and spot $n-1$. Thus the number of parking functions is equal to $|\VPF_{n-1}^{\downarrow}|$.
     \item  There is one $b_i$ such that $b_i = 1$. 
    If there is only one $b_{i}=1$, then $i=n$ since $\beta$ is non-increasing. 
    We know that $b_{n}$ is predetermined and all other $b_{i}$ are between $2$ and $n$, but in this case the number of parking functions is not exactly equal to $|\VPF_{n-1}^{\downarrow}|$. 
    This is due to the existence of some parking functions $\gamma=(c_1,\ldots,c_{n-1})\in\VPF_{n-1}^{\downarrow}$ that disallow the $n$th car from parking in spot 1 after translation. For example, $(3, 1, 1)\in \VPF_{3}^{\downarrow}$ and after translation by 1 is $(4, 2, 2)$. However, $(4, 2, 2, 1)\notin \VPF_{4}^{\downarrow}$ since  car $3$ takes spot $1$ and car $4$ would be unable to park.     Moreover, $\gamma$ is such a parking function if and only if  
    $c_{n-1}=c_{n-2}=1$. 
    Thus we denote this set of parking functions as $\VPF_{n-1}^{\downarrow}(1,\{1,1\})$, and the number of parking functions in this case is $|\VPF_{n-1}^{\downarrow}(1,\{1,1\})|$.
     \item  There are two $b_{i}$ such that $b_{i} = 1$.
     We know that the only two $b_{i}$'s such that $b_{i}=1$ are $b_{n-1}$ and $b_{n}$. 
     Since $b_{n-1}=b_{n}=1$, then car $n-1$ and car $n$ park in spot $1$ and spot $2$, respectively. 
     Hence, $b_{n-2}\neq 2$, otherwise spot $2$ is taken before  car $n$ parks in it. 
     Since $2\leq b_{n-2}\leq 4$, then $b_{n-2}=3$ or $b_{n-2}=4$. 
     If $b_{n-2}=3$, then there are no other cars that prefer spot $3$, otherwise spot $2$ will be taken before car $n$ parks there. 
     Since $b_{n-1}$ and $b_{n-2}$ are fixed, then the number of parking functions in this case is equal to the number of parking functions $\gamma=(c_1,\ldots,c_{n-1})\in\VPF_{n-1}^{\downarrow}$ with 
     $c_{n-1}=c_{n-2}=1$, i.e. $|\VPF_{n-1}^{\downarrow}(1,\{1,1\})|$. Analogous to Case \eqref{case1}, if $b_{n-2}=4$, the number of parking functions is equal to the number of parking functions of length $n-3$, i.e. $|\VPF_{n-3}^{\downarrow}|$. 
\end{enumerate}
Taking the sum of the counts in each of the above cases yields
\begin{align*}
|\VPF_n^{\downarrow}| &\ = \ |\VPF_{n-1}^{\downarrow}|+\left(|\VPF_{n-1}^{\downarrow}|-|\VPF_{n-1}^{\downarrow}(1,\{1,1\})|\right)+\left(|\VPF_{n-3}^{\downarrow}|+|\VPF_{n-1}^{\downarrow}(1,\{1,1\})|\right)\\
&\ = \ 2\cdot|\VPF_{n-1}^{\downarrow}| + |\VPF_{n-3}^{\downarrow}|.
\end{align*}
To conclude, we note that 
$|\VPF_1^\downarrow|=|\{(1,1)\}|=1$,
$|\VPF_2^\downarrow|=|\{(1, 1),(2, 1),(2, 2)\}|=3$, and
$|\VPF_3^\downarrow|=|\{(2, 2, 2),(3, 1, 1),(3, 2, 1),(3, 2, 2),(3, 3, 1),(3, 3, 2)\}|=6$.
\end{proof}

We conclude with a closed formula for the number of non-increasing vacillating parking functions.

\begin{corollary}\label{delta}
If $n\geq 1$, then
\begin{equation*}
|\VPF_n^{\downarrow}|\ = \ \Delta_1\left(\frac{1}{r}\right)^{n+1}+\Delta_2\left(\frac{1}{\alpha+\beta i}\right)^{n+1}+\Delta_3\left(\frac{1}{\alpha-\beta i}\right)^{n+1},
\end{equation*}
where $r$, $\alpha+\beta i$, and $\alpha-\beta i$ denote the real and complex conjugate roots of the $f(x)=x^3+2x-1$ and
\begin{align*}
  \Delta_1 \ = \ \frac{r^3+r^2}{2r^3+1},\qquad   \Delta_2 \ = \ \frac{(\alpha^2-\beta^2+\alpha)+\beta(2\alpha+1)i}{-2\beta^2+2(\alpha-r)\beta i}, \qquad \Delta_3 \ = \ \frac{(\alpha^2-\beta^2+\alpha)-\beta(2\alpha+1)i}{-2\beta^2-2(\alpha-r)\beta i}. 
\end{align*}
\end{corollary}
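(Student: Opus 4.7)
The approach is the standard method for solving a homogeneous linear recurrence with constant coefficients. By Theorem~\ref{non-increasing} the sequence $|\VPF_n^\downarrow|$ satisfies $a_n = 2a_{n-1} + a_{n-3}$, whose characteristic polynomial is $g(x) = x^3 - 2x^2 - 1$. The key observation is that the roots of $g$ are precisely the reciprocals of the roots of $f(x) = x^3 + 2x - 1$: substituting $x = 1/y$ into $g(x) = 0$ and multiplying through by $-y^3$ yields $y^3 + 2y - 1 = f(y) = 0$. Consequently, if $r$ and $\alpha \pm \beta i$ denote the real and complex conjugate roots of $f$, then $1/r$ and $1/(\alpha \pm \beta i)$ are the roots of $g$, and the general solution to the recurrence takes the form
\[
|\VPF_n^{\downarrow}| \ = \ \Delta_1 \left(\tfrac{1}{r}\right)^{n+1} + \Delta_2 \left(\tfrac{1}{\alpha+\beta i}\right)^{n+1} + \Delta_3 \left(\tfrac{1}{\alpha-\beta i}\right)^{n+1}
\]
for uniquely determined constants $\Delta_1,\Delta_2,\Delta_3$. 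The shift $n\mapsto n+1$ in the exponent simply absorbs a factor of $1/\lambda$ into each $\Delta_i$.

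Next, I would substitute $n=1,2,3$ and use the base values $|\VPF_1^\downarrow|=1$, $|\VPF_2^\downarrow|=3$, $|\VPF_3^\downarrow|=6$ from Theorem~\ref{non-increasing} to obtain a $3\times 3$ linear system in $\Delta_1,\Delta_2,\Delta_3$, which one then inverts by Cramer's rule. An equivalent route is a direct induction: the inductive step is automatic because each summand $\Delta_i(1/\lambda_i)^{n+1}$ individually satisfies $a_n = 2a_{n-1} + a_{n-3}$ (since $\lambda_i$ is a root of $g$), so the whole verification reduces to checking the three base cases against the proposed closed form.

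The main obstacle is the algebraic simplification required to cast the three constants into the compact shapes stated in the corollary. For this I would exploit $f(r)=0$, which gives $r^3 = 1 - 2r$ and hence $r^3 + r^2 = (1-r)^2$ and $2r^3 + 1 = 3 - 4r$, together with the Vieta relations $r + 2\alpha = 0$, $2r\alpha + \alpha^2 + \beta^2 = 2$, and $r(\alpha^2 + \beta^2) = 1$. These identities convert the raw outputs of Cramer's rule into the displayed numerators and denominators; in particular, the factor $\alpha - r$ appearing in the denominators of $\Delta_2$ and $\Delta_3$ is expressible as $3\alpha$ via $r=-2\alpha$, which makes the conjugate symmetry between $\Delta_2$ and $\Delta_3$ transparent. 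The computation is entirely routine but bookkeeping-heavy, and is the only nontrivial step of the proof.
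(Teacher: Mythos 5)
Your route is sound and genuinely different from the paper's. You solve the recurrence of Theorem~\ref{non-increasing} by the characteristic-polynomial method: the characteristic polynomial of $a_n = 2a_{n-1}+a_{n-3}$ is $x^3-2x^2-1$, whose roots are the reciprocals of the roots of $f(x)=x^3+2x-1$ (your substitution $x=1/y$ is correct, and the roots are distinct since $f'=3x^2+2>0$ forces one real and two non-real roots), so the general solution has the stated shape $\Delta_1(1/r)^{n+1}+\Delta_2(1/(\alpha+\beta i))^{n+1}+\Delta_3(1/(\alpha-\beta i))^{n+1}$ with the constants pinned down by the three initial values. The paper instead forms the generating function $F(x)=\sum_{n\ge 0}|\VPF_n^{\downarrow}|x^n=-\frac{x^2+x}{x^3+2x-1}$, does a partial-fraction decomposition over the roots of $f$, and expands each term as a geometric series. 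The payoff of the paper's method is that the constants come out \emph{directly} as residues, $\Delta_1=\frac{x^2+x}{x^2+rx+1/r}\big|_{x=r}=\frac{r^3+r^2}{2r^3+1}$ and $\Delta_2=-\frac{x^2+x}{(x-r)(\alpha-\beta i-x)}\big|_{x=\alpha+\beta i}$, whose numerator $(\alpha+\beta i)^2+(\alpha+\beta i)$ and denominator $2\beta i(\alpha+\beta i-r)=-2\beta^2+2(\alpha-r)\beta i$ are literally the expressions in the corollary; no Cramer's rule or Vieta massaging is needed.

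The one substantive shortfall in your write-up is that the identification of $\Delta_1,\Delta_2,\Delta_3$ with the specific expressions in the statement --- which is the actual content of the corollary, since the general form $\sum_i\Delta_i\lambda_i^{-(n+1)}$ is automatic --- is described but never carried out. You correctly list the tools ($r^3=1-2r$, $r+2\alpha=0$, $r(\alpha^2+\beta^2)=1$, etc.) and the computation would succeed, but as written the proof asserts rather than verifies that the Cramer's-rule solution simplifies to the displayed constants. If you want to keep your approach while avoiding the bookkeeping, the cleanest fix is to verify the base cases the other way around: show that the candidate sequence $c_n=\Delta_1(1/r)^{n+1}+\Delta_2(1/(\alpha+\beta i))^{n+1}+\Delta_3(1/(\alpha-\beta i))^{n+1}$ with the \emph{given} $\Delta_i$ satisfies $c_1=1$, $c_2=3$, $c_3=6$, and then invoke uniqueness; alternatively, adopt the paper's residue computation, which produces the constants in exactly the stated form with essentially no algebra.
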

\begin{proof}
Let $F(x)=\sum_{n\geq 0}|\VPF_{n}^{\downarrow}|x^n$. By Theorem \ref{non-increasing}, we have that
\begin{equation*}
\sum_{n\geq 0}|\VPF_{n+3}^{\downarrow}|x^n \ = \ 2\sum_{n\geq 0}|\VPF_{n+2}^{\downarrow}|x^n + \sum_{n\geq 0}|\VPF_{n}^{\downarrow}|x^n.
\end{equation*}
Hence
\begin{align}
\frac{F(x)-|\VPF_{2}^{\downarrow}|x^2-|\VPF_{1}^{\downarrow}|x}{x^3}\ = \ \frac{2F(x)-2|\VPF_{1}^{\downarrow}|x}{x^2}+F(x).\label{gen1}
\end{align}
Substituting the initial values $|\VPF_{1}^{\downarrow}|=1$, $|\VPF_{2}^{\downarrow}|=3$, and simplifying \eqref{gen1} we arrive at 
\begin{gather*}
F(x) \ = \ -\frac{x^2+x}{x^3+2x-1}.
\end{gather*}
Let
\begin{align}F(x)\ = \ \frac{\Delta_1}{r-x}+\frac{\Delta_2}{\alpha+\beta i-x}+\frac{\Delta_3}{\alpha-\beta i-x}.\label{neweq}
\end{align}
Then
$\Delta_1\ = \ F(x)(r-x)\big\rvert_{x=r}.$
By Vieta’s formula, $r(\alpha+\beta i)(\alpha-\beta i)=1$ implies $(\alpha+\beta i)(\alpha-\beta i)=1/r $ and $r+(\alpha+\beta i)(\alpha-\beta i)=0 $ implies $(\alpha+\beta i)(\alpha-\beta i)=-r$.
Hence, $\alpha+\beta i$ and $\alpha-\beta i$ are the zeros of $g(x)=x^2+rx+\frac{1}{r}$, which implies that $x^3+2x-1=-(r-x)(x^2+rx+\frac{1}{r})$. Thus,
\begin{align*}
\Delta_1 &\ = \ \frac{x^2+x}{x^2+rx+\frac{1}{r}}\bigg\rvert_{x=r}\ = \  \frac{r^3+r^2}{2r^3+1}.
\end{align*}
Similarly we establish that
\begin{align*}
\Delta_2 &\ = \ F(x)(\alpha+\beta i-x)\bigg\rvert_{x=\alpha+\beta i}
\ = \ -\frac{x^2+x}{(x-r)(\alpha-\beta i-x)}\bigg\rvert_{x=\alpha+\beta i} 
\ = \ \frac{(\alpha^2-\beta^2+\alpha)+\beta(2\alpha+1)i}{-2\beta^2+2(\alpha-r)\beta i}
\end{align*}
and
\begin{align*}
\Delta_3 &\ = \ F(x)(\alpha-\beta i-x)\bigg\rvert_{x=\alpha-\beta i} 
\ = \ -\frac{x^2+x}{(x-r)(\alpha+\beta i-x)}\bigg\rvert_{x=\alpha-\beta i} \ = \ \frac{(\alpha^2-\beta^2+\alpha)-\beta(2\alpha+1)i}{-2\beta^2-2(\alpha-r)\beta i}.
\end{align*}
Substituting $\Delta_1$, $\Delta_2$, and $\Delta_3$ into \eqref{neweq} and simplifying yields
\begin{align*}
F(x) &\ = \ \frac{\Delta_1/r}{1-x/r}+\frac{\Delta_2/(\alpha+\beta i)}{1-x/(\alpha+\beta i)}+\frac{\Delta_3/(\alpha-\beta i)}{1-x/(\alpha-\beta i)}\\
&\ = \ \frac{\Delta_1}{r}\sum_{n\geq 0}\left(\frac{x}{r}\right)^n+\frac{\Delta_2}{\alpha+\beta i}\sum_{n\geq 0}\left(\frac{x}{\alpha+\beta i}\right)^n+\frac{\Delta_3}{\alpha-\beta i}\sum_{n\geq n}\left(\frac{x}{\alpha-\beta i}\right)^n\\
&\ = \ \Delta_1\sum_{n\geq 0}\left(\frac{1}{r}\right)^{n+1}x^n+\Delta_2\sum_{n\geq 0}\left(\frac{1}{\alpha+\beta i}\right)^{n+1}x^n+\Delta_3\sum_{n\geq n}\left(\frac{1}{\alpha-\beta i}\right)^{n+1}x^n.
\end{align*}
Moreover, since $F(x)=\sum_{n\geq 0}|\VPF_{n}^{\downarrow}|x^n$, then
\[|\VPF_{n}^{\downarrow}| \ = \ \Delta_1\left(\frac{1}{r}\right)^{n+1}+\Delta_2\left(\frac{1}{\alpha+\beta i}\right)^{n+1}+\Delta_3\left(\frac{1}{\alpha-\beta i}\right)^{n+1}.\qedhere\]
\end{proof}

\section{Generalization to \texorpdfstring{$k$}{k}-vacillating Parking Functions}\label{Generalization}
In this section, we generalize our study to $k$-vacillating parking functions for $k\geq 1$. We will establish a connection between $k$-vacillating parking functions and vacillating parking functions.

\begin{theorem}\label{genearlgeneral}
Fix integers $1\leq k\leq n$. If $n=ka+b$, with $0\leq a$ and $0\leq b\leq k-1$, then \[|\VPF_n(k)| \ = \left(\
\frac{n!}{\prod_{t=0}^{k-1}\lfloor\frac{n+t}{k}\rfloor!}\right)\cdot|\VPF_{a+1}|^b\cdot|\VPF_{a}|^{k-b},\]
where $|\VPF_{0}|\coloneqq1$.
\end{theorem}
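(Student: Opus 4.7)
The plan is to exploit the defining feature of the $k$-vacillating rule: a car with preference $p$ only ever probes the spots $p$, $p-k$, and $p+k$, all of which lie in the same residue class modulo $k$. Consequently, a car whose preference lies in the class $S_r \coloneqq \{r,\, r+k,\, r+2k,\ldots\}\cap[n]$ can only park within $S_r$, so the $k$-vacillating parking process decouples into $k$ independent subproblems, one per residue class, each of which turns out to be an ordinary (i.e., $1$-vacillating) parking process after relabeling.

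Writing $n = ka + b$ with $0\leq b\leq k-1$, a quick count shows $|S_r| = a+1$ for $r\in\{1,\ldots,b\}$ and $|S_r| = a$ for $r\in\{b+1,\ldots,k\}$. Evaluating $\lfloor (n+t)/k\rfloor$ case by case (equal to $a$ when $0\leq t<k-b$ and equal to $a+1$ when $k-b\leq t\leq k-1$) yields the factorial identity $\prod_{t=0}^{k-1}\lfloor (n+t)/k\rfloor! = a!^{\,k-b}(a+1)!^{\,b}$. For the main bijection, given $\alpha=(a_1,\ldots,a_n)\in\VPF_n(k)$, I set $I_r = \{i\in[n] : a_i\in S_r\}$ and let $\pi_r\colon S_r\to\{1,\ldots,|S_r|\}$ be the order-preserving relabeling $r+jk\mapsto j+1$. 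The map $\pi_r$ conjugates the $k$-vacillating step (try $p$, then $p-k$, then $p+k$) to the $1$-vacillating step (try $j+1$, then $j$, then $j+2$), with the caveat that $p-k$ fails to exist precisely when the relabeled preference equals $1$ and $p+k$ fails to exist precisely when the relabeled preference equals $|S_r|$. Hence the preferences of the cars in $I_r$, taken in the induced order of their labels and relabeled by $\pi_r$, form a $1$-vacillating parking function of length $|S_r|$. Since every car in $I_r$ must park in $S_r$ and no other car can, validity of $\alpha$ forces $|I_r| = |S_r|$ for every $r$; conversely, any ordered partition $(I_1,\ldots,I_k)$ of $[n]$ with the prescribed class sizes, together with a choice of $\beta^{(r)}\in\VPF_{|S_r|}$ per class, assembles uniquely into an element of $\VPF_n(k)$.

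The count then follows: the ordered partition contributes the multinomial coefficient $\binom{n}{|S_1|,\ldots,|S_k|} = n!/\bigl(a!^{\,k-b}(a+1)!^{\,b}\bigr)$, and the $k$ independent $1$-vacillating parking functions contribute $|\VPF_{a+1}|^b\cdot|\VPF_a|^{k-b}$; combining these with the factorial identity recovers the claimed formula, with the edge case $a=0$ absorbed by the convention $|\VPF_0|\coloneqq 1$. The main obstacle is verifying the bijection rigorously, namely the assertion that the $k$ subproblems are genuinely independent — that no car of $I_r$ ever probes a spot outside $S_r$, and that a tuple assembled from $k$ independent $1$-vacillating parking functions necessarily parks globally. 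Both directions reduce to the fact that the $k$-vacillating step preserves residue classes modulo $k$, but making this precise (including the boundary cases $j=0$ and $j=|S_r|-1$, where one of the alternative spots fails to exist in the original and the relabeled process simultaneously) is the step that requires the most care.
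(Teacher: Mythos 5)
Your proposal is correct and follows essentially the same route as the paper's proof: decompose the street into the $k$ residue classes modulo $k$, observe that the $k$-vacillating rule preserves these classes so the process splits into independent $1$-vacillating subproblems of lengths $a$ and $a+1$, and multiply by the multinomial coefficient $n!/\bigl(a!^{\,k-b}(a+1)!^{\,b}\bigr)$ counting the interleavings. If anything, your write-up is more explicit than the paper's about why the relabeling conjugates the two parking rules, including the boundary cases where $p-k$ or $p+k$ fails to exist.
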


\begin{proof}
For any $1\leq j\leq k$, we define $I_j\coloneqq\{j, j+k, j+2k,\ldots, j+(m-1)k\}\subseteq [n]$ as the set of all spots congruent to $j$ modulo $k$. 
Let
 $I_j^{m}$ denote the set of $m$-tuples with entries in $I_j$. 
 Furthermore, let
$\VPF_{I_j}(k)\subseteq I_j^{m}$
be the set of parking functions in which all cars park in the spots indexed by the set $I_j$. 
Then, we can think of every $\gamma\in \VPF_{I_j}(k)$ as an $\alpha\in \VPF_{m}$, where each step is $1$ instead of $k$, and vice versa. Hence, $|\VPF_{I_j}(k)|= |\VPF_{m}|$. Moreover, if $1\leq j_{1},j_{2}\leq k$ and $j_{1}\neq j_{2}$, then $\VPF_{I_{j_1}}(k)$ and $\VPF_{I_{j_2}}(k)$ are independent. Since $m=a+1$ if $1\leq j\leq b$ and $m=a$ if $b+1\leq j\leq k$, 
this gives us $|\VPF_{a+1}|^b\cdot|\VPF_{a}|^{k-b}$ parking functions. 
However, since there is no restriction on where the entries of the smaller parking functions of length $a$ and $a+1$ can be placed in the parking function of length $n$, 
we need to multiply by a factor equal to the number of ways to place those entries within the larger tuple. 
This can be done in $\frac{n!}{(a!)^{k-b}(a+1)^b}$  ways. 
Note that this placement preserves the ordering of the entries in the smaller tuples and does not permute those entries.
Since $(a!)^{k-b}=\prod_{t=0}^{k-b-1}\lfloor\frac{n+t}{k}\rfloor!$ and $(a+1!)^{b}=\prod_{t=k-b}^{k-1}\lfloor\frac{n+t}{k}\rfloor!$, then
\[|\VPF_n(k)| \ = \ \left(\
\frac{n!}{\prod_{t=0}^{k-1}\lfloor\frac{n+t}{k}\rfloor!}\right)\cdot|\VPF_{a+1}|^b\cdot|\VPF_{a}|^{k-b}.\]
To conclude, note that the empty tuple, denoted by $()$, is the only element in $\VPF_0$. Thus $|\VPF_0|=1$.
\end{proof}

 

\section{Future Work}\label{sec:future}

There are many well-established methods to find  closed formulas for linear recurrences and systems of linear recurrences. However, it seems that none of the techniques can be applied to the system outlined in Theorem~\ref{thm:multirecurrence} because of the presence of the summations, the number of linear recurrences, as well as the number of variables involved. Hence, we pose the following.
\begin{problem}\label{problem1}
For $n\geq 1$, give a closed formula for the value of $|\VPF_{n}|$. 
If such a closed formula does not exist, then give an approximation for the value of $|\VPF_{n}|$.
\end{problem}

We also do not provide a closed formula for $|\VPF_{n}(k)|$, since, by Theorem~\ref{genearlgeneral}, this would rely on knowing a closed formula for $|\VPF_{n}|$. Hence, we pose the following.
\begin{problem}\label{problem2}
For $1\geq k\geq n$, give a closed formula for the value of $|\VPF_{n}( k)|$. 
If such a closed formula does not exist, then give an approximation for the value of $|\VPF_{n}(k)|$.
\end{problem}

The partition method in the proof of Theorem \ref{genearlgeneral} enumerates $k$-vacillating parking functions in terms of smaller parking functions. However, the same method fails if one wants to enumerate monotonic $k$-vacillating parking functions in terms of smaller monotonic parking functions, since monotonicity is not preserved after some translation and concatenation of monotonic parking functions. For example, if one wants to write a $2$-vacillating parking function of length $4$ in terms of $(2, 2)$ and $(2, 2)$ (both of which are vacillating parking functions), then after proper translation and concatenation the parking function becomes $(3, 4, 3, 4)$, which is not monotonic. Hence, we pose the following:

\begin{problem}
Fix integers $1\leq k\leq n$. Give a recurrence relation or a closed form formula for monotonic $k$-vacillating parking functions.
\end{problem}

It is well-known that any permutation of the entries of a parking function results in another parking functions. This property of parking functions is often referred to as ``permutation invariance.'' Vacillating parking functions are not permutation invariant. For example, $(1,1,2)\in\VPF_3$ as car 1 parks in spot 1, car 2 parks in spot 2, and car 3 parks in spot 3. However $(2,1,1)\notin\VPF_3$ since car 1 parks in spot 2, car 2 parks in spot 1, but car 3 fails to park. Hence we pose the following.

\begin{problem}
Fix integers $1\leq k\leq n$. Give a characterization and enumeration for the subset of $k$-vacillating parking functions that is permutation invariant.
\end{problem}

\section*{Acknowledgments}
P.~E.~Harris was supported through a Karen Uhlenbeck EDGE Fellowship.
The authors thank the William College's Science Center for funding that supported this research. The authors also thank J.~Carlos Mart\'{i}nez Mori for helpful conversations throughout the completion of this manuscript.

\bibliographystyle{plain}
\bibliography{Bibliography.bib}

\end{document}